\documentclass[11pt,a4paper]{article}

\usepackage{inputenc}
\usepackage{amsmath}
\usepackage{bm}
\usepackage{bbold}
\usepackage{amsthm}

\usepackage{hyperref}

\setlength{\mathsurround}{1pt}

\title{A Max-Algebra Approach to Modeling\\ and Simulation of Tandem Queueing Systems\thanks{Mathematical and Computer Modelling, 1995. Vol.~22, no.~3, pp.~25-31.}}

\author{N. K. Krivulin\thanks{Faculty of Mathematics and Mechanics, St.~Petersburg State University, 28 Universitetsky Ave., St.~Petersburg, 198504, Russia, 
nkk@math.spbu.ru}
\thanks{
The writing of this paper was completed while the author was visiting the
Center for Economic Research (CentER) at Tilburg University, Tilburg, The
Netherlands. The author thanks the CentER for its hospitality. He is also very
much grateful to J.~P.~C.~Kleijnen for valuable comments and suggestions which
allow the author to improve the clarity of presentation.}
}

\date{}

\newtheorem{theorem}{Theorem}
\newtheorem{lemma}[theorem]{Lemma}

\def\sumo_#1^#2{\setbox0=\hbox{$\displaystyle{\sum}$}
                \setbox1=\hbox{$\scriptstyle{#1}$}
                \setbox2=\hbox{$\scriptstyle{#2}$}
		\setbox3=\hbox{${}_{{}_\oplus}\mathsurround=0pt$}
		\dimen1=.5\wd1 \advance\dimen1 by-.5\wd0
		\ifdim\dimen1>0pt
		   \ifdim\dimen1>\wd3 \kern\wd3 \else\kern\dimen1\fi\fi
		\dimen2=.5\wd2 \advance\dimen2 by-.5\wd0
		\ifdim\dimen2>0pt
		   \ifdim\dimen2>\wd3 \kern\wd3 \else\kern\dimen2\fi\fi
		\mathop{{\sum}{}_{{}_\oplus}}_{\kern-\wd3 #1}^{\kern-\wd3 #2}}

\newenvironment{algorithm}[1]{\vspace{1em}{\noindent\bf Algorithm #1.}\bf
                                \begin{tabbing}}{\end{tabbing}}

\begin{document}

\maketitle

\begin{abstract}
Max-algebra models of tandem single-server queueing systems with both finite
and infinite buffers are developed. The dynamics of each system is described
by a linear vector state equation similar to those in the conventional linear
systems theory, and it is determined by a transition matrix inherent in the
system. The departure epochs of a customer from the queues are considered as
state variables, whereas its service times are assumed to be system
parameters. We show how transition matrices may be calculated from the service
times, and present the matrices associated with particular models. We also
give a representation of system performance measures including the system time
and the waiting time of customers, associated with the models. As an
application, both serial and parallel simulation procedures are presented,
and their performance is outlined.
\\

\textit{Key-Words:} max-algebra, tandem queues, dynamic state equation, performance measure,
parallel simulation algorithm.
\end{abstract}

\section{Introduction}

In the analysis of queueing systems, algebra models arise naturally from the
recursive equations of the Lindley type, which present a formalism widely used
for the representation of dynamics of a variety of queueing systems models.
There are the recursive equations designed to describe the $ G/G/m $
queue, closed and open tandem queueing systems which may have both infinite
and finite buffers, and queueing networks with deterministic routing (see
e.g., \cite{1,2,3,4,5,6}). These equations, which allow the dynamics of a queueing
system to be represented in a convenient and unified way well suited to
analytical treatments, also provide the basis for the development of efficient
procedures of queueing system simulation \cite{1,2,3,7}.

Since recursive equations often involve only the operations of arithmetic
addition and maximization, they offer the prospect of the representation of
queueing system models in terms of the {\it max-algebra theory\/}
\cite{8,9,10}. The implementation of max-algebra allows one to rewrite the
recursive equations as linear scalar and vector algebraic equations \cite{11},
which are actually almost identical to those in the conventional linear system
theory. The benefits of the max-algebra approach in the analysis of queueing
systems are twofold: first, it gives us the chance to exploit results of the
conventional linear algebra, which have been reformulated, and are now
available in the max-algebra. The classical results already reformulated and
proved in the max-algebra include, in particular, the solution of the
eigenvalue problem, the Cayley-Hamilton theorem, and Cramer's rule
\cite{8,9,10,12}.

Other benefits have a direct relationship to computational aspects of
simulation. In fact, the algebraic models of queues lead to matrix-vector
max-algebra multiplications as the basis of simulation procedures \cite{2}.
New possibilities then arise in queueing system simulation to employ efficient
computational methods and algorithms available in numerical algebra, including
those designed for implementation on parallel and vector processors.

In this paper we develop max-algebra models of open and closed tandem
single-server queueing systems which may have both infinite and finite
buffers, and we give related representations of system performance measures.
We start with preliminary algebraic definitions in Section~2 which also
includes a technical lemma underlying the development of models in later
sections. In Section~3, the dynamics of open and closed systems with infinite
buffers is described by a vector state equation which is determined by a
transition matrix inherent in the system. The departure epochs of a customer
from the queues are considered as state variables, whereas its service times
are assumed to be system parameters. We show how transition matrices may be
calculated from the service times, and present the matrices associated with
certain particular models.

Section~4 extends the dynamic equation to cover open tandem systems with
finite buffers, which operate under both manufacturing and communication
blocking rules. The representations of system performance measures including
the system times and the waiting times of customers, associated with the
models are given in Section~5. In Section~6, we present serial and parallel
simulation algorithms based on the algebraic models, and outline their
performance. Finally, Section~7 gives conclusions.

\section{Preliminary Algebraic Definitions and Results}

In this section we briefly outline basic facts about matrix max-algebra, which
underlie the algebraic models and methods of queueing system simulation,
presented in the subsequent sections. Further details concerning the
max-algebra and its applications can be found in survey papers \cite{10,12}. A
thorough theoretical analysis of this algebra and related algebraic systems is
given in \cite{8,9}.

We start with the max-algebra of real numbers, which is the system
$ (\underline{\mathbb{R}}, \oplus, \otimes) $, where
$ \underline{\mathbb{R}} = \mathbb{R} \cup \{\varepsilon\} $ with
$ \varepsilon = -\infty $, and
$$
x \oplus y = \max(x,y), \qquad x \otimes y = x + y
$$
for any $ x,y \in \mathbb{R} $.

It is easy to see that these new operations, namely addition $ \oplus $
and product $ \otimes $, possess the following properties:
\begin{alignat*}{2}
& \text{\it Associativity:} & \quad &
                         x \oplus (y \oplus z) = (x \oplus y) \oplus z, \\
& & &                       
                         x \otimes (y \otimes z) = (x \otimes y) \otimes z; \\
& \text{\it Commutativity:} &\quad & x \oplus y = y \oplus x, \quad
                            x \otimes y = y \otimes x; \\
& \text{\it Distributivity:} & \quad &
	     x \otimes (y \oplus z) = (x \otimes y) \oplus (x \otimes z); \\
& \text{\it Idempotency of Addition:} & \quad & x \oplus x = x.
\intertext{With $ e = 0 $, we further have}
& \text{\it Null and Identity Elements:} & \quad &
                       x \oplus \varepsilon = \varepsilon \oplus x = x, \quad
                       x \otimes e = e \otimes x = x; \\
& \text{\it Absorption Rule:} & \quad &
x \otimes \varepsilon = \varepsilon \otimes x = \varepsilon.
\end{alignat*}
Clearly, in the max-algebra these properties allow ordinary algebraic
manipulation of expressions involving the max-algebra operations to be
performed under the usual conventions regarding brackets and precedence of
$ \otimes $ over $ \oplus $.

Note finally that in the max-algebra, for each $ x \in \mathbb{R} $, there
exists its multiplicative inverse $ x^{-1} $ such that
$ x \otimes x^{-1} = x^{-1} \otimes x = e $. This is the usual arithmetic
inverse which satisfies, in particular, the evident condition
$$
(x \otimes y)^{-1} = x^{-1} \otimes y^{-1}
$$
for all $ x,y \in \mathbb{R} $.

\subsection{Max-algebra of Matrices}

The scalar max-algebra is extended to the max-algebra of matrices in the
regular way. Specifically, for any square $(n\times n)$-matrices
$ A = (a_{ij}) $ and $ B = (b_{ij}) $ with entries in
$ \underline{\mathbb{R}} $, the elements of the matrices $ C = A \oplus B $
and $ D = A \otimes B $ are calculated as
$$
c_{ij} = a_{ij} \oplus b_{ij}, \qquad \text{and} \qquad
d_{ij} = \sumo_{k=1}^{n} a_{ik} \otimes b_{kj},
$$
where $ \sum_{\oplus} $ denotes the iterated operation $ \oplus $,
$ i=1,\ldots,n $; $ j=1,\ldots,n $. Similarly, the multiplication of a
matrix by a scalar, as well as the operations of both matrix-vector
multiplication and vector addition may be routinely defined.

As in the scalar max-algebra, there are null and unit elements in the matrix
algebra, defined respectively as
$$
\mathcal{E} = \left(
             \begin{array}{ccccc}
	       \varepsilon & \ldots & \varepsilon \\
	       \vdots      & \ddots & \vdots \\
	       \varepsilon & \ldots & \varepsilon
	     \end{array}
	   \right), \qquad
       E = \left(
             \begin{array}{ccccc}
	       e           &        & \varepsilon \\
	                   & \ddots & \\
	       \varepsilon &        & e
	     \end{array}
	   \right).
$$
One can easily see that for any square matrix $ A $, it holds
$$
\mathcal{E} \otimes A = A \otimes \mathcal{E} = \mathcal{E}, \qquad
E \otimes A = A \otimes E = A, \qquad
\mathcal{E} \oplus A = A \oplus \mathcal{E} = A.
$$

It is not difficult to verify that the other properties of scalar operations
$ \oplus $ and $ \otimes $, with the exception of the commutativity of
multiplication, are also extended to the matrix algebra. Similar to the
conventional matrix algebra, matrix multiplication in the max-algebra is not
commutative in general. Furthermore, the multiplicative inverse does not
generally exists in this matrix algebra. However, one can easily obtain the
inverse of any diagonal square matrix $ A $ with entries not equal to
$ \varepsilon $ only on the diagonal. It is clear that with
$$
A = \left(
      \begin{array}{ccccc}
        a_{1}       &        & \varepsilon \\
	            & \ddots & \\
	\varepsilon &        & a_{n}
      \end{array}
     \right), \qquad
A^{-1} = \left(
          \begin{array}{ccccc}
            a_{1}^{-1}  &        & \varepsilon \\
	                & \ddots & \\
	    \varepsilon &        & a_{n}^{-1}
	  \end{array}
         \right),
$$
where $ a_{i} > \varepsilon $ for all $ i=1, \ldots, n $, we have
$ A \otimes A^{-1} = A^{-1} \otimes A = E $.

Finally, the properties of matrix multiplication allow us to exploit the
symbol $ A^{p} $ with a square matrix $ A $ and nonnegative integer
$ p $, as used in the conventional algebra:
$$
A^{e} = E, \qquad
A^{p} = A^{p-1} \otimes A
= \underbrace{A \otimes \cdots \otimes A}_{\text{$p $ times}}
\quad \text{for $ p \geq 1$}.
$$

\subsection{A Linear Algebraic Equation}

Let us now examine a vector equation which will be encountered below in
algebraic representations of tandem queueing system dynamics. For given
$(n\times n)$-matrix $ A $ and column $n$-vector $ \bm{b} $, we
consider the implicit equation in the $n$-vector $ \bm{x} $
\begin{equation}
\bm{x} = A \otimes \bm{x} \oplus \bm{b}, \label{tag1}
\end{equation}
which is generally identified analogous to the conventional linear algebra as
a linear equation.

The next lemma offers the solution of \eqref{tag1} for a particular class of
matrices $ A $. A detailed investigation of this and other linear equations
in the general case can be found in \cite{9}.
\begin{lemma}
If there exists a nonnegative integer $ p $ such that
$ A^{p} = \mathcal{E} $, then \eqref{tag1} has the unique solution
\begin{equation}
\bm{x} = \sumo_{i=0}^{p-1} A^{i} \otimes \bm{b}. \label{tag2}
\end{equation}
\end{lemma}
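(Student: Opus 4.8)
The plan is to establish existence and uniqueness separately, exploiting the hypothesis $A^{p} = \mathcal{E}$ to truncate the formal series $E \oplus A \oplus A^{2} \oplus \cdots$ after finitely many terms, in direct analogy with the way nilpotency of $A$ makes the Neumann series $(I-A)^{-1} = \sum_{k \geq 0} A^{k}$ of conventional linear algebra collapse to a finite sum.

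First I would check that the vector in \eqref{tag2} is a solution. Substituting it into the right-hand side of \eqref{tag1} and using distributivity of $\otimes$ over $\oplus$ to carry $A$ inside the iterated sum, followed by a shift of the summation index, gives
$$
A \otimes \bm{x} \oplus \bm{b}
= \sumo_{i=0}^{p-1} A^{i+1} \otimes \bm{b} \oplus \bm{b}
= \sumo_{i=1}^{p} A^{i} \otimes \bm{b} \oplus \bm{b}.
$$
The top term is $A^{p} \otimes \bm{b} = \mathcal{E} \otimes \bm{b}$, which equals the null vector by the absorption rule; recombining the remaining terms with $\bm{b} = A^{e} \otimes \bm{b}$ recovers $\sumo_{i=0}^{p-1} A^{i} \otimes \bm{b} = \bm{x}$. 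Hence \eqref{tag2} satisfies \eqref{tag1}.

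For uniqueness I would take an arbitrary solution $\bm{y}$ and repeatedly back-substitute the relation $\bm{y} = A \otimes \bm{y} \oplus \bm{b}$ into its own right-hand side. One substitution yields $\bm{y} = A^{2} \otimes \bm{y} \oplus A \otimes \bm{b} \oplus \bm{b}$, and continuing until the exponent of $A$ on the residual $\bm{y}$ reaches $p$, an easy induction gives
$$
\bm{y} = A^{p} \otimes \bm{y} \oplus \sumo_{i=0}^{p-1} A^{i} \otimes \bm{b}.
$$
Since $A^{p} = \mathcal{E}$, the term $A^{p} \otimes \bm{y}$ is again the null vector, so $\bm{y} = \sumo_{i=0}^{p-1} A^{i} \otimes \bm{b} = \bm{x}$, and every solution coincides with \eqref{tag2}.

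These computations are routine: the only points requiring care are that every manipulation uses solely associativity, distributivity, and the absorbing property of $\mathcal{E}$ established above --- none of which relies on the (absent) commutativity of matrix multiplication --- and that the index shift in the iterated $\oplus$-sum be handled correctly, just as when telescoping an ordinary geometric series. I expect the mildest obstacle to lie in writing the inductive back-substitution cleanly rather than in any genuine difficulty.
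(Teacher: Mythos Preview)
Your proof is correct and follows essentially the same approach as the paper: the paper's argument is precisely your back-substitution step, iterating \eqref{tag1} $p$ times to obtain $\bm{x} = A^{p}\otimes\bm{x}\oplus(E\oplus A\oplus\cdots\oplus A^{p-1})\otimes\bm{b}$ and then invoking $A^{p}=\mathcal{E}$. The only difference is that you additionally verify existence by direct substitution of \eqref{tag2} into \eqref{tag1}, whereas the paper treats existence as implicit in the derivation; your version is therefore slightly more complete, but the core idea is identical.
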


\begin{proof}
Recurrent substitutions of $ \bm{x} $ from \eqref{tag1}
into the right-hand side of \eqref{tag1} and trivial algebraic manipulations
give
\begin{multline*}
\bm{x} = A \otimes \bm{x} \oplus \bm{b}
= A \otimes (A \otimes \bm{x} \oplus \bm{b}) \oplus \bm{b}
     = A^{2} \otimes \bm{x} \oplus ( E \oplus A) \otimes \bm{b} \\
= \cdots = A^{p} \otimes \bm{x}
        \oplus (E \oplus A \oplus \cdots \oplus A^{p-1}) \otimes \bm{b}.
\end{multline*}
With the condition $ A^{p} = \mathcal{E} $, we immediately arrive at
\eqref{tag2}. It is also evident from the above calculations that the obtained
solution is unique.
\end{proof}

As examples of the matrix $ A $ satisfying the condition of Lemma~1, we
consider either lower or upper triangular matrices which have, in addition,
the entries equal to $ \varepsilon $ on the main diagonal. Specifically,
it is not difficult to verify that for the matrix $ A $ with entries
$$
a_{ij} = \begin{cases}
	   \alpha_{i}> \varepsilon, & \text{if $ i=j+1$} \\
	   \varepsilon,             & \text{otherwise},
	 \end{cases}
$$
which will appear in the next sections, it holds that
$ A^{p} \neq \mathcal{E} $, for $ p=1,\ldots,n-1 $, and
$ A^{n} = \mathcal{E} $.

\section{Representation of Tandem System Dynamics}

We start with the scalar max-algebra equation representing the dynamics of a
single-server queue, and then extend it to vector equations associated with
tandem systems of queues. The queue is assumed to have a buffer with infinite
capacity, and to operate under the first-come, first-served queue discipline.

We denote the $k^{\text{th}}$ arrival epoch to the queue and the
$k^{\text{th}}$ departure epoch from the queue by $ a(k) $ and
$ d(k) $ respectively. The service time of customer $ k $ is
represented by $ \tau_{k} $. Under the conditions that the queue starts
operating at time zero and it has no customers at the initial time, the
dynamics of the system can be readily described in terms of $ a(k) $ and
$ d(k) $ as state variables, by the following ordinary algebraic equation
\cite{1,2,6}:
$$
d(k) = \max(a(k),d(k-1)) + \tau_{k}.
$$
By replacing the usual operation symbols by those of max-algebra, one can
rewrite this equation in its equivalent form as
$$
d(k)=\tau_{k} \otimes a(k) \oplus \tau_{k} \otimes d(k-1).
$$

Let us now suppose that there is a system of $ n $ single-server queues
with infinite buffers. Furthermore, let $ a_{i}(k) $ and $ d_{i}(k) $
be the $k^{\text{th}}$ arrival and departure epochs, and $ \tau_{ik} $ be
the service time of the $k^{\text{th}}$ customer for queue $ i $. With the
vector-matrix notations
$$
\bm{a}(k) = \left(
                    \begin{array}{ccccc}
	              a_{1}(k) \\
		      \vdots   \\
	              a_{n}(k)
		    \end{array}
	          \right), \qquad
\bm{d}(k) = \left(
                    \begin{array}{ccccc}
	              d_{1}(k) \\
		      \vdots   \\
	              d_{n}(k)
		    \end{array}
	          \right), \qquad
\mathcal{T}_{k} = \left(
                 \begin{array}{ccccc}
	           \tau_{1k}   &        & \varepsilon \\
		               & \ddots &             \\
	           \varepsilon &        & \tau_{nk}
		 \end{array}
	       \right),
$$
and the conditions $ \bm{d}(0) = (e, \ldots, e)^{T} $, and
$ \bm{d}(k) = (\varepsilon, \ldots, \varepsilon)^{T} $ for all
$ k < 0 $, we may represent the dynamics of the whole system by the
vector equation
\begin{equation}
\bm{d}(k) = \mathcal{T}_{k} \otimes \bm{a}(k)
                          \oplus \mathcal{T}_{k} \otimes \bm{d}(k-1). \label{tag3}
\end{equation}

Note that equation \eqref{tag3} is quite general; it may be appropriate for a
variety of single-server queueing systems with infinite buffers, and not just
for tandem systems. We will show how this equation can be refined in the case
of two particular tandem systems in the next subsections.

\subsection{Closed Tandem Systems}

Consider a closed system of $ n $ single-server queues in tandem, with
infinite buffers. In the system, the customers have to pass through the queues
consecutively so as to receive service at each server. After their service
completion at the $n^{\text{th}}$ server, the customers return to the first
queue for a new cycle of service. It is assumed that the transition of
customers from a queue to the next one requires no time.

There are a finite number of customers circulating through the system. We
suppose that at the initial time, all servers are free, whereas the buffer at
the $i^{\text{th}}$ server contains $ c_{i} $ customers,
$ i=1, \ldots, n$. It is easy to see that in this case, the arrival times of
customers can be defined as
\begin{equation}
a_{i}(k) = \begin{cases}
              d_{n}(k-c_{1}),   & \text{if $ i=1$} \\
              d_{i-1}(k-c_{i}), & \text{if $ i=2,\ldots,n$},
	   \end{cases} \label{tag4}
\end{equation}
for all $ k=1,2,\ldots $.

Let us now assume for simplicity that $ c_{i} = 1 $ for all
$ i=1,\ldots,n $. With this assumption, one can rewrite \eqref{tag4} in the
vector form
$$
\bm{a}(k) = F \otimes \bm{d}(k-1), \qquad
\text{where} \qquad
F = \left(
      \begin{array}{ccccc}
        \varepsilon & \cdots & \varepsilon & e  \\
	e	    & \ddots & \varepsilon & \varepsilon \\
		    & \ddots & \ddots      & \vdots \\
	\varepsilon &        & e           & \varepsilon
      \end{array}
    \right).
$$
Substitution of this expression for $ \bm{a}(k) $ into \eqref{tag3}
leads to
$$
\bm{d}(k)
= \mathcal{T}_{k} \otimes F \otimes \bm{d}(k-1)
                              \oplus \mathcal{T}_{k} \otimes \bm{d}(k-1)
= \mathcal{T}_{k} \otimes (F \oplus E) \otimes \bm{d}(k-1).
$$
We may now describe the dynamics of the closed system under consideration by
the linear state equation
\begin{equation}
\bm{d}(k) = T_{k} \otimes \bm{d}(k-1), \label{tag5}
\end{equation}
with the state transition matrix
\begin{equation}
T_{k} = \mathcal{T}_{k} \otimes (F \oplus E)
= \left(
    \begin{array}{ccccc}
      \tau_{1k}   & \varepsilon & \cdots & \varepsilon & \tau_{1k}   \\
      \tau_{2k}   & \tau_{2k}   &        & \varepsilon & \varepsilon \\
                  & \ddots      & \ddots &             & \vdots      \\
      \varepsilon & \varepsilon & \ddots & \tau_{n-1k} & \varepsilon \\
      \varepsilon & \varepsilon &        & \tau_{nk}   & \tau_{nk}
    \end{array}
  \right). \label{tag6}
\end{equation}
One can see that equation \eqref{tag5} offers a very convenient way of
calculating successive state vectors $ \bm{d}(k) $ in the above
system, by performing simple algebraic operations. Moreover, it is not
difficult to understand that \eqref{tag5} can be readily extended to represent
closed tandem systems with arbitrary $ c_{i} $, $ 0 \leq c_{i} < \infty $.
A usual technique based on the employment of a modified state vector
$ \widetilde{\bm{d}}(k) $ which integrates several consecutive state
vectors of the original model, can be exploited to get \eqref{tag5} as a
representation of such systems.

To illustrate, let us suppose that at the initial time, there are
$ c_{i} = 2 $ customers in the buffer at each queue $ i $,
$ i=1,\ldots,n $. In this case, starting from \eqref{tag4}, we may define the
vector of the $k^{\text{th}}$ arrival epochs as
$$
\bm{a}(k) = F \otimes \bm{d}(k-2),
$$
and then rewrite \eqref{tag3} in the form
$$
\bm{d}(k) = \mathcal{T}_{k} \otimes \bm{d}(k-1)
                      \oplus \mathcal{T}_{k} \otimes F \otimes \bm{d}(k-2).
$$

Finally, with the new state vector
$ \widetilde{\bm{d}}(k) = \left(\begin{array}{l}
                                      \bm{d}(k) \\
                                      \bm{d}(k-1)
				    \end{array}\right) $, it is not difficult to
arrive at the state equation
$$
\widetilde{\bm{d}}(k) = \widetilde{T}_{k}
                                        \otimes \widetilde{\bm{d}}(k-1),
$$
with the state transition matrix
$$
\widetilde{T}_{k} = \left(
                      \begin{array}{ccccc}
                        \mathcal{T}_{k} & \mathcal{T}_{k} \otimes F  \\
	                E            & \mathcal{E}
		      \end{array}
                    \right).
$$

\subsection{Open Tandem Systems}

The purpose of this subsection is to extend equation \eqref{tag5} to the
representation of {\it open\/} tandem queueing system models. In a series of
single-server queues with infinite buffers, let us assign the first queue for
representing an external arrival stream of customers. Each customer that
arrives into the system has to pass through queues $ 2 $ to $ n $, and
then leaves the system.

As in the closed system above, we suppose that $ \tau_{ik} $ represents
the $k^{\text{th}}$ service time at queue $ i $, $ i=2,\ldots,n $, whereas
$ \tau_{1k} $ now denotes the interarrival time between the
$k^{\text{th}}$ customer and his predecessor in the external arrival stream.
At the initial time, all servers are assumed to be free of customers, and,
except for the first server, their buffers are empty; that is,
$ c_{i} = 0 $, $ i=2,\ldots,n $. Finally, we put $ c_{1} = \infty $ to
provide the model with the infinite arrival stream.

It is clear that we have to define the arrival epochs in the system as
\begin{equation}
a_{i}(k) = \begin{cases}
             \varepsilon,   & \text{if $ i=1$} \\
             d_{i-1}(k),    & \text{if $ i=2,\ldots,n$},
	   \end{cases} \label{tag7}
\end{equation}
for all $ k=1,2,\ldots $. Proceeding to vector notation, we get
$$
\bm{a}(k) = G \otimes \bm{d}(k), \qquad
\text{where} \qquad
G = \left(
      \begin{array}{ccccc}
        \varepsilon & \cdots & \cdots      & \varepsilon \\
	e	    & \ddots &             & \vdots      \\
		    & \ddots & \ddots      & \vdots \\
	\varepsilon &        & e           & \varepsilon
      \end{array}
    \right).
$$

With this vector representation, equation \eqref{tag3} takes the form
$$
\bm{d}(k) = \mathcal{T}_{k} \otimes G \otimes \bm{d}(k)
                                \oplus \mathcal{T}_{k} \otimes \bm{d}(k-1).
$$

The above equation can be considered as an implicit equation in
$ \bm{d}(k) $, having the form of \eqref{tag1} with
$ A = \mathcal{T}_{k} \otimes G $ and
$ \bm{b} = \mathcal{T}_{k} \otimes \bm{d}(k-1) $.
Moreover, one can readily see that the matrix
$$
\mathcal{T}_{k} \otimes G
= \left(
    \begin{array}{ccccc}
      \varepsilon & \cdots & \cdots      & \varepsilon \\
      \tau_{2k}   & \ddots &             & \vdots      \\
                  & \ddots & \ddots      & \vdots \\
      \varepsilon &        & \tau_{nk}   & \varepsilon
    \end{array}
  \right)
$$
looks just like that presented in Subsection~2.2 as an example; therefore, it
satisfies the condition of Lemma~1.

By applying Lemma~1, we obtain the solution
$$
\bm{d}(k)
= \sumo_{i=0}^{n-1} (\mathcal{T}_{k} \otimes G)^{i}
\otimes \mathcal{T}_{k} \otimes \bm{d}(k-1).
$$
Clearly, we have arrived at equation \eqref{tag5}, where the transition matrix
is defined as
$$
T_{k} = \sumo_{i=0}^{n-1} (\mathcal{T}_{k} \otimes G)^{i} \otimes \mathcal{T}_{k}.
$$
It is not difficult to verify that \cite{11}
\begin{multline}
T_{k}
\\
= \left( \begin{array}{rrrcc}
           \tau_{1k} & \varepsilon & \varepsilon & \ldots & \varepsilon \\
           \tau_{2k} \otimes \tau_{1k} & \tau_{2k} & \varepsilon & \ldots
                                                              & \varepsilon \\
           \vdots & \vdots &     & \ddots & \vdots \\
           \tau_{n-1k} \otimes \cdots \otimes \tau_{1k} &
           \tau_{n-1k} \otimes \cdots \otimes \tau_{2k} &
           \tau_{n-1k} \otimes \cdots \otimes \tau_{3k} &     & \varepsilon \\
           \tau_{nk} \otimes \cdots \otimes \tau_{1k} &
           \tau_{nk} \otimes \cdots \otimes \tau_{2k} &
           \tau_{nk} \otimes \cdots \otimes \tau_{3k} &  \ldots & \tau_{nk}
	 \end{array}
  \right). \label{tag8}
\end{multline}

\section{Tandem Queues with Finite Buffers and Blocking}

Suppose now that the buffers of the servers in the open tandem system
described above have limited capacity. Consequently, servers may be blocked
according to one of the blocking rules. In this paper, we restrict our
consideration to {\it manufacturing} blocking and {\it communication}
blocking, which are most commonly encountered in practice \cite{1,2}.

Consider an open system with $ n $ servers in tandem, and assume the
buffer at the $i^{\text{th}}$ server, $ i=2, \ldots, n$, to have capacity
$ b_{i}$, $ 0 \leq b_{i} < \infty $. We suppose that the buffer of the
first server, which is the input buffer of the system, is infinite. Below is
shown how the dynamics of tandem systems which operate according to both
manufacturing and communication blocking rules, can be described by state
equation \eqref{tag5}.

\subsection{Manufacturing Blocking}

Let us first suppose that the dynamics of the system follows the manufacturing
blocking rule. Under this type of blocking, if upon completion of a service,
the $i^{\text{th}}$ server sees the buffer of the $(i+1)^{\text{st}}$ server
full, the former server cannot be freed and has to remain busy until the
$(i+1)^{\text{st}}$ server completes its current service to provide a free
space in its buffer. Clearly, since the customers leave the system upon their
service completion at the $n^{\text{th}}$ server, this server cannot be
blocked.

It is not difficult to understand that the dynamics can be described by the
ordinary scalar equations \cite{1,2,6}
\begin{align*}
d_{i}(k) & = \max(\max(a_{i}(k),d_{i}(k-1))
                  + \tau_{ik}, d_{i+1}(k-b_{i+1}-1)),
                  \\
                  & \qquad i=1,\ldots,n-1, \\
d_{n}(k) & = \max(a_{n}(k),d_{n}(k-1)) + \tau_{nk},
\end{align*}
where $ a_{i}(k) $,  $ i=1,\ldots,n $, are still defined by \eqref{tag7}.
With max-algebra, one can readily rewrite these equations as \cite{11}
\begin{align*}
d_{i}(k) & = \tau_{ik} \otimes a_{i}(k) \oplus \tau_{ik} \otimes d_{i}(k-1)
                         \oplus d_{i+1}(k-b_{i+1}-1),
                         \\
                         & \qquad i=1,\ldots,n-1, \\
d_{n}(k) & = \tau_{nk} \otimes a_{n}(k) \oplus \tau_{nk} \otimes d_{n}(k-1).
\end{align*}

Assuming $ b_{i} = 0 $, $ i=2,\ldots,n $, for simplicity, we get the above
set of equations in the vector form
\begin{multline*}
\bm{d}(k) = \mathcal{T}_{k} \otimes \bm{a}(k)
                                 \oplus \mathcal{T}_{k} \otimes \bm{d}(k-1)
                                 \oplus G^{T} \otimes \bm{d}(k-1) \\
= \mathcal{T}_{k} \otimes \bm{a}(k)
                 \oplus (\mathcal{T}_{k} \oplus G^{T}) \otimes \bm{d}(k-1),
\end{multline*}
where $ G^{T} $ denotes the transpose of the above introduced matrix
$ G $. With $ \bm{a}(k) = G \otimes \bm{d}(k) $, we have the
equation
$$
\bm{d}(k)
= \mathcal{T}_{k} \otimes G \otimes \bm{d}(k)
  \oplus (\mathcal{T}_{k} \oplus G^{T}) \otimes \bm{d}(k-1).
$$

As in the case of the open tandem system with infinite buffers, we may apply
Lemma~1 to solve this equation for $ \bm{d}(k) $, and obtain
$$
\bm{d}(k) = \sumo_{i=0}^{n-1} (\mathcal{T}_{k} \otimes G)^{i}
                \otimes (G^{T} \oplus \mathcal{T}_{k}) \otimes \bm{d}(k-1),
$$
which equals \eqref{tag5} with the transition matrix
$$
T_{k} = \sumo_{i=0}^{n-1} (\mathcal{T}_{k} \otimes G)^{i}
                                          \otimes (G^{T} \oplus \mathcal{T}_{k}).
$$

Calculation of $ T_{k} $ \cite{11} leads us to
\begin{multline}
T_{k}
\\
= \left(
    \begin{array}{rrrcc}
      \tau_{1k} & e & \varepsilon & \ldots & \varepsilon \\
      \tau_{2k} \otimes \tau_{1k} & \tau_{2k} & e &  & \varepsilon \\
      \vdots & \vdots &     & \ddots &    \\
      \tau_{n-1k} \otimes \cdots \otimes \tau_{1k} &
      \tau_{n-1k} \otimes \cdots \otimes \tau_{2k} &
      \tau_{n-1k} \otimes \cdots \otimes \tau_{3k} &  & e \\
      \tau_{nk} \otimes \cdots \otimes \tau_{1k} &
      \tau_{nk} \otimes \cdots \otimes \tau_{2k} &
      \tau_{nk} \otimes \cdots \otimes \tau_{3k} & \ldots & \tau_{nk}
    \end{array}
  \right). \label{tag9}
\end{multline}
Note that the matrices in \eqref{tag8} and \eqref{tag9} differ only in the
elements of the upper diagonal adjacent to the main diagonal, which become
equal to $ e $ in \eqref{tag9}.

Let us now derive equation \eqref{tag5} for the system with the capacity of each
buffer $ b_{i} = 1 $, $ i=2,\ldots,n $. Clearly, the dynamics of the
system can be described by the equation
$$
\bm{d}(k) = \mathcal{T}_{k} \otimes G \otimes \bm{d}(k)
                                 \oplus \mathcal{T}_{k} \otimes \bm{d}(k-1)
                                 \oplus G^{T} \otimes \bm{d}(k-2).
$$
The application of Lemma~1 leads us to the equation
$$
\bm{d}(k) = \sumo_{i=0}^{n-1} (\mathcal{T}_{k} \otimes G)^{i}
                               \otimes (\mathcal{T}_{k} \otimes \bm{d}(k-1)
                               \oplus G^{T} \otimes \bm{d}(k-2)).
$$
Finally, with the notations
$$
\widetilde{\bm{d}}(k)
= \left(
    \begin{array}{l}
      \bm{d}(k) \\
      \bm{d}(k-1)
    \end{array}
  \right) \qquad \text{and} \qquad
S_{k} = \sumo_{i=0}^{n-1} (\mathcal{T}_{k} \otimes G)^{i},
$$
we arrive at \eqref{tag5}, taking the form
$$
\widetilde{\bm{d}}(k)
= \left(
    \begin{array}{ccccc}
      S_{k} \otimes \mathcal{T}_{k} & S_{k} \otimes G^{T} \\
      E                                 & \mathcal{E}
    \end{array}
  \right) \otimes \widetilde{\bm{d}}(k-1).
$$

\subsection{Communication Blocking}

We now turn to a brief discussion of systems operating under the communication
blocking rule. This type of blocking requires a server not to initiate
service of a customer if the buffer of the next server is full. In this case,
the server remains unavailable until the current service at the next server is
completed.

Let us suppose that the above system follows communication blocking. The
dynamics of this system can be described by the equations \cite{1,6}
\begin{align*}
d_{i}(k) & = \max(a_{i}(k),d_{i}(k-1),d_{i+1}(k-b_{i+1}-1)) + \tau_{ik},
\\
& \qquad i=1,\ldots,n-1, \\
d_{n}(k) & = \max(a_{n}(k),d_{n}(k-1)) + \tau_{nk},
\end{align*}
or, equivalently, by the max-algebra equations
\begin{align*}
d_{i}(k) & = \tau_{ik} \otimes a_{i}(k) \oplus \tau_{ik} \otimes d_{i}(k-1)
       \oplus \tau_{ik} \otimes d_{i+1}(k-b_{i+1}-1),
       \\
       & \qquad i=1,\ldots,n-1, \\
d_{n}(k) & = \tau_{nk} \otimes a_{n}(k) \oplus \tau_{nk} \otimes d_{n}(k-1).
\end{align*}

For a particular system with $ b_{i} = 0 $, $ i=2,\ldots,n $, we can write
in the same manner as for manufacturing blocking
\begin{multline*}
\bm{d}(k) = \mathcal{T}_{k} \otimes \bm{a}(k)
                                 \oplus \mathcal{T}_{k} \otimes \bm{d}(k-1)
                \oplus \mathcal{T}_{k} \otimes G^{T} \otimes \bm{d}(k-1) \\
= \mathcal{T}_{k} \otimes G \otimes \bm{d}(k) \oplus
              \mathcal{T}_{k} \otimes (E \oplus G^{T}) \otimes \bm{d}(k-1).
\end{multline*}
Furthermore, we have the solution
$$
\bm{d}(k) = \sumo_{i=0}^{n-1} (\mathcal{T}_{k} \otimes G)^{i} \otimes
              \mathcal{T}_{k} \otimes (E \oplus G^{T}) \otimes \bm{d}(k-1),
$$
which takes the form of \eqref{tag5} with the matrix
\begin{multline}
T_{k} = \sumo_{i=0}^{n-1} (\mathcal{T}_{k} \otimes G)^{i} \otimes
                                      \mathcal{T}_{k} \otimes (E \oplus G^{T}) \\
= \left(
    \begin{array}{rrrcl}
     \tau_{1k} & \tau_{1k} & \varepsilon & \ldots & \varepsilon \\
     \tau_{2k} \otimes \tau_{1k} & \tau_{2k} \otimes \tau_{1k} &
                                                 \tau_{2k} &  & \varepsilon \\
     \vdots & \vdots & \vdots & \ddots &  \\
     \tau_{n-1k} \otimes \cdots \otimes \tau_{1k} &
     \tau_{n-1k} \otimes \cdots \otimes \tau_{1k} &
     \tau_{n-1k} \otimes \cdots \otimes \tau_{2k} & \ldots &
                                                                \tau_{n-1k} \\
     \tau_{nk} \otimes \cdots \otimes \tau_{1k} &
     \tau_{nk} \otimes \cdots \otimes \tau_{1k} &
     \tau_{nk} \otimes \cdots \otimes \tau_{2k} & \ldots &
                                                 \tau_{nk} \otimes \tau_{n-1k}
    \end{array}
  \right).
\label{tag10}
\end{multline}

Let us finally consider the system with the capacity of each buffer
$ b_{i}=1 $, $ i=2,\ldots,n $. As for manufacturing blocking, we may
represent the dynamics of the system by the equation
$$
\widetilde{\bm{d}}(k)
= \left(
    \begin{array}{ccccc}
      S_{k} \otimes \mathcal{T}_{k}
                          & S_{k} \otimes \mathcal{T}_{k} \otimes G^{T} \\
      E                   & \mathcal{E}
    \end{array}
  \right) \otimes \widetilde{\bm{d}}(k-1).
$$

\section{Representation of Tandem System Performance}

In this section, we show how performance measures of open tandem systems may
be represented based on the max-algebra models. We consider the measures
representing the system time and the waiting time of customers, and give
the corresponding linear max-algebra equations which allow us to describe and
calculate these criteria in a simple way. The ordinary representation of other
measures which one normally chooses in the analysis of queueing systems,
including the utilization of a server, the number of customers at a queue, and
the queue length, involves the operation of division \cite{1,3,4,6}, and they
therefore cannot be expressed through linear equations in max-algebra.

\subsection{The System Time of Customers}

For the open tandem system with infinite buffers described above, let us
define the vector of the system (sojourn) times of the $k^{\text{th}}$
customer as $ \bm{s}(k) = (s_{1}(k),\ldots,s_{n}(k))^{T} $, where
$ s_{i}(k) $ denotes the time required for the customer to pass through
all queues up to and including queue $ i $. Since the first queue represents
the external arrival stream of customers, we do not have to include the time
spent in this queue, and we therefore have $ s_{1}(k) = 0 $. Furthermore,
the above definition of the system time leads to the equations
$$
s_{i}(k) = d_{i}(k) - d_{1}(k), \quad i=1,\ldots,n.
$$
With max-algebra vector notations, we may rewrite these equations as
\begin{equation}
\bm{s}(k) = \bm{d}(k) \otimes d_{1}^{-1}(k). \label{tag11}
\end{equation}
It follows from \eqref{tag5} and the equality
$ d_{1}(k) = d_{1}(k-1) \otimes \tau_{1k} $ that
$$
\bm{s}(k) = T_{k} \otimes \bm{d}(k-1)
                               \otimes d_{1}^{-1}(k-1) \otimes \tau_{1k}^{-1}.
$$
By applying \eqref{tag11} with $ k $ replaced by $ k-1 $, we finally have
\begin{equation}
\bm{s}(k) = U_{k} \otimes \bm{s}(k-1), \label{tag12}
\end{equation}
where $ U_{k} = \tau_{1k}^{-1} \otimes T_{k} $.

As it is easy to see, the above dynamic equation is appropriate for
representing the system time in open tandem systems with both infinite and
finite buffers. For particular systems, equation \eqref{tag12} will differ only
in the matrix $ T_{k} $, and thus in the matrix $ U_{k} $, inherent in
their associated dynamic state equations. Finally, note that a lower
triangular transition matrix $ T_{k} $ will result in a matrix
$ U_{k} $ of the same kind. One can consider the open tandem system with
infinite buffers as an example.

\subsection{The Waiting Time of Customers}

Consider the open tandem system with infinite buffers once again. The system
time of a customer in this system consists of his service time and the waiting
time. Therefore, introducing the symbol $ w_{i}(k) $ for the
$k^{\text{th}}$ customer to denote the total time spent on waiting for service
at the queues from 1 to $ i $, we have the equations
\begin{align*}
s_{1}(k) & = w_{1}(k) = 0, \\
s_{i}(k) & = w_{i}(k) + \sum_{j=2}^{i} \tau_{jk}, \quad i=2,3,\ldots,n.
\end{align*}

In order to represent the relation between the system and waiting times in
max-algebra vector form, let us further introduce the vector
$ \bm{w}(k) = (w_{1}(k),\ldots,w_{n}(k))^{T} $, and the diagonal
matrix
$$
P_{k}
= \left(
    \begin{array}{ccccc}
      \tau_{1k}   & \varepsilon                 & \ldots & \varepsilon \\
      \varepsilon & \tau_{1k} \otimes \tau_{2k} &        & \varepsilon \\
      \vdots      &                             & \ddots & \\
      \varepsilon & \varepsilon                 &        &
                                    \tau_{1k} \otimes \cdots \otimes \tau_{nk}
    \end{array}
  \right).
$$
Clearly, we may now write
$$
\bm{s}(k) = \tau_{1k}^{-1} \otimes P_{k} \otimes \bm{w}(k).
$$
Since the multiplicative inverse exists for the matrix $ P_{k} $, it is not
difficult to resolve the above equation for $ \bm{w}(k) $:
$$
\bm{w}(k) = \tau_{1k} \otimes P_{k}^{-1} \otimes \bm{s}(k).
$$
With \eqref{tag12}, we successively obtain
\begin{multline*}
\bm{w}(k)
= \tau_{1k} \otimes P_{k}^{-1} \otimes \tau_{1k}^{-1} \otimes T_{k}
                                                  \otimes \bm{s}(k-1) \\
= P_{k}^{-1} \otimes T_{k} \otimes \tau_{1k-1}^{-1} \otimes P_{k-1}
                               \otimes \left( \tau_{1k-1} \otimes P_{k-1}^{-1}
                                  \otimes \bm{s}(k-1) \right) \\
= \tau_{1k-1}^{-1} \otimes P_{k}^{-1} \otimes T_{k} \otimes P_{k-1}
                                                    \otimes \bm{w}(k-1).
\end{multline*}

Finally, we arrive at the dynamic equation
\begin{equation}
\bm{w}(k) = V_{k,k-1} \otimes \bm{w}(k-1), \label{tag13}
\end{equation}
with the transition matrix
$ V_{k,k-1}
        = \tau_{1k-1}^{-1} \otimes P_{k}^{-1} \otimes T_{k} \otimes P_{k-1} $.

Note that, in general, equation \eqref{tag13} can be extended to tandem queues
with finite buffers and blocking. In that case, however, the quantities
$ w_{i}(k) $ will include not only the time spent on waiting for service,
but also the blocking time of servers.

\section{Serial and Parallel Simulation of Tandem Queues}

In this section, we briefly discuss serial and parallel simulation algorithms
based on the algebraic models presented above, and outline their performance.
We take, as the starting point, the state equation
\begin{equation}
\bm{d}(k) = T_{k} \otimes \bm{d}(k-1). \label{tag14}
\end{equation}
with the matrix $ T_{k} $ defined by \eqref{tag8}. Clearly, calculations
with other matrices, including performance evaluation via equations
\eqref{tag12} and \eqref{tag13}, will normally differ little in complexity.

Furthermore, we assume as in \cite{1,2,7} that the service times
$ \tau_{ik} $, which are normally defined in queueing system simulation as
realizations of given random variables, are available for all
$ i=1,\ldots,n $ and $ k=1,2\ldots $, when required. We will therefore
concentrate only on procedures of evaluating the system state vectors, based
on \eqref{tag14}.

\subsection{Simulation with a Scalar Processor}

Let us first consider time and memory requirements when only one scalar
processor is available for simulation of tandem systems. The related serial
algorithm consists of consecutive steps, evaluating new state vectors. The
$k^{\text{th}}$ step involves determination of the transition matrix
$ T_{k} $ and multiplication of this matrix by the vector
$ \bm{d}(k-1) $ to produce the vector $ \bm{d}(k) $.
However, particular algorithms designed for computation with matrices
\eqref{tag6}, \eqref{tag8}, \eqref{tag9}, and \eqref{tag10} may execute the step in
different ways, according to the structure of the matrices, so as to reduce
time and memory costs.

Assume that a system is simulated until the $K^{\text{th}}$ service completion
at server $ n $, and denote the overall number of the operations
$ \oplus $ and $ \otimes $ to be performed within the $k^{\text{th}}$
step to evaluate the matrix $ T_{k} $ and to compute the product
$ T_{k} \otimes \bm{d}(k-1) $ respectively by $ N_{1} $ and
$ N_{2} $. In that case, the entire simulation algorithm will require
$ N = K(N_{1}+N_{2}) $ operations, ignoring index manipulations. Finally,
we denote by $ M $ the number of memory locations involved in the
computations.

Let us now consider the open tandem system with infinite buffers, and denote
the entries of its transition matrix $ T_{k} $ by $ t_{ij}^{(k)} $,
$ i=1,\ldots,n $, $ j=1,\ldots,n $. Taking into account the lower
triangular form of the matrix $ T_{k} $ defined by \eqref{tag8}, a serial
algorithm for calculating $ K $ successive vectors $ \bm{d}(k) $
may be readily designed as follows.

\begin{algorithm}{1}
For $ i=1, \ldots, n, $ do $ d_{i}(0) \longleftarrow \varepsilon .$
                                                                      \\
For  $ k=1, \ldots, K, $ do \\
\hphantom{For} for $ i=1, \ldots, n, $ do \\
\hphantom{For for} $ t_{ii}^{(k)} \longleftarrow \tau_{ik}; $ \\
\hphantom{For for} for $ j=1, \ldots, i-1, $ do \\
\hphantom{For for for} $ t_{ij}^{(k)} \longleftarrow t_{i-1j}^{(k)}
                                                 \otimes \tau_{ik}; $ \\
\hphantom{For} for $ i=1, \ldots, n, $ do \\
\hphantom{For for} $ d_{i}(k)
                    \longleftarrow t_{i1}^{(k)} \otimes d_{i}(k-1); $ \\
\hphantom{For for} for $ j=2, \ldots, i, $ do \\
\hphantom{For for for} $ d_{i}(k) \longleftarrow d_{i}(k)
                                     \oplus t_{ij}^{(k)} \otimes d_{j}(k-1). $
\end{algorithm}

It is not difficult to see that for the algorithm, $ N_{1}=n(n+1)/2 $
and $ N_{2} = n^{2} $ operations are required; it entails
$ M = n(n+5)/2 $ memory locations. With the total number of operations
$ N = O(n(3n+1)K/2) $ involved in calculating $ K $ state vectors, the
algorithm proves to have a performance comparable to other serial simulation
procedures \cite{1,7}. As an example, the serial algorithm in \cite{7},
designed for simulation of open tandem systems with infinite buffers can be
considered. This algorithm allows one to compute the $K^{\text{th}}$ departure
time at queue $ n $ in $ O(2K(n+1)) $ operations; however, it is
actually a scalar algorithm intended to obtain only the value of
$ d_{n}(k) $ rather than the whole vector $ \bm{d}(k) $.
Therefore, one has to multiply its performance criterion by $ n $ so as to
provide a proper basis for a comparison between algorithms. It should be noted
in conclusion that calculations with matrix \eqref{tag6} can be performed using
only $ N = O(2Kn) $ operations and $ M = O(3n) $ memory locations.

\subsection{Simulation with a Vector Processor}

Suppose now that simulation is executed on a vector processor equipped with
vector registers of a length large enough for the processing of $n$-vectors.
With the notations
$ \bm{t}_{i}^{(k)} = (t_{i1}^{(k)},\ldots,t_{in}^{(k)}) $,
$ i=1,\ldots,n $, we may write the following vector modification of
Algorithm~1:

\begin{algorithm}{2}
$ \bm{d}(0)
             \longleftarrow (\varepsilon, \ldots, \varepsilon)^{T}. $ \\
For $ k=1,\ldots,K, $ do \\
\hphantom{For} $ T_{k} \longleftarrow \mathcal{E}; $ \\
\hphantom{For} for $ i=1,\ldots,n, $ do \\
\hphantom{For for} $ t_{ii}^{(k)} \longleftarrow \tau_{ik}; $ \\
\hphantom{For for} $ \bm{t}_{i}^{(k)} \longleftarrow
                        \bm{t}_{i-1}^{(k)} \otimes \tau_{ik}; $ \\
\hphantom{For for} $ d_{i}(k) \longleftarrow \bm{t}_{i}^{(k)}
                                                  \otimes \bm{d}(k-1). $
\end{algorithm}

First note that implementation of the vector processor makes it possible to
compute matrix \eqref{tag8} by performing only $ N_{1} = n $ vector
operations. Furthermore, evaluation of each element of the vector
$ \bm{d}(k) $ from \eqref{tag14} actually involves both componentwise
addition of a row of the matrix $ T_{k} $ to the vector
$ \bm{d}(k-1) $, and determination of the maximum over the elements of
this vector sum.

With the vector processor, one can add two vectors together in a single
operation. It follows from the triangular form of \eqref{tag8} that to compute
the $i^{\text{th}}$ element of $ \bm{d}(k) $, one actually has to
perform no more than $ i $ maximizations. By applying the {\it recursive
doubling method\/} \cite{13}, we may obtain the maximum over $ i $
consecutive elements of a vector in $ \log_{2}i  $ operations. For all
entries of the vector $ \bm{d}(k) $, we therefore get
$ N_{2} = n+\log_{2}1 + \cdots + \log_{2}n = n+\log_{2}(n!) $.

Finally, evaluation of $ K $ state vectors requires
$ N = O(K(\log_{2}(n!)+2n)) $ operations of a vector processor. As is
easy to see, vector computations allow us to achieve the speedup
$ S_{v} = O(n(3n+1)/(\log_{2}(n!)/2+n)) $ in relation to the sequential
procedure discussed in the previous subsection.

\subsection{Parallel Simulation of Tandem Systems}

We conclude this section with the discussion of a parallel simulation
algorithm intended for implementation on single instruction, multiple data
(SIMD) parallel processors. Other parallel algorithms based on the algebraic
models can be found in \cite{2}.

Let $ P $ be the number of processors available, and $ P \geq n $. The
algorithm consists of $ L = \lceil K/P \rceil $ steps, where
$ \lceil x \rceil $ denotes the smallest integer greater than or equal
to $ x $, and it can be described as follows.

\begin{algorithm}{3}
$ \bm{d}(0)
             \longleftarrow (\varepsilon, \ldots, \varepsilon)^{T}. $ \\
for $ l=1, \ldots, L, $ do \\
\hphantom{For} in parallel,
               for $ i=(l-1)P+1, (l-1)P+2, \ldots, \min(lP,K), $ do \\
\hphantom{For in parallel, for} {\it evaluate\/} $ T_{i}; $ \\
\hphantom{For} for $ i=(l-1)P+1, (l-1)P+2, \ldots, \min(lP,K), $ do \\
\hphantom{For for} in parallel, for $ j=1, \ldots, n, $ do
$ d_{j}(i) \longleftarrow \bm{t}_{j}^{(i)}
                                                  \otimes \bm{d}(i-1). $
\end{algorithm}

To estimate the performance of this algorithm, first note that each step
starts with parallel evaluation of $ P $ consecutive transition matrices,
which entails $ N_{1} = n(n+1)/2 $ parallel operations. Next follows the
determination of $ P $ consecutive state vectors, with each vector
evaluated in parallel by computing its elements on separate processors. Since
evaluation of a vector element involves $ n $ ordinary additions and the
same number of maximizations, one has to perform $ 2n $ parallel
operations so as to determine the whole vector. Therefore, we have
$ N_{2} = 2Pn $ operations required for computing $ P $ vectors.

Finally, the entire algorithm entails $ N = L(n(n+1)/2+2Pn) $ parallel
operations. It is not difficult to understand that with $ P \geq n $, we
get $ N=O(3Kn/2) $ as $ K \to \infty $. Moreover, by comparison with
the performance of the above sequential procedure, one can conclude that for
$ P = n $ and $ K $ sufficiently large, the parallel algorithm
achieves the speedup $ S_{P} = O(3P/5) $.

\section{Conclusions}

The max-algebra approach provides a very convenient way of representing
tandem queueing system models. The algebraic models describe the dynamics of
the systems through linear max-algebra equations similar to those being
studied in the conventional linear systems theory. The approach therefore
offers the potential for extending classical results of both conventional
linear algebra and systems theory to analyze queueing systems. Moreover,
it provides the basis for applying methods and algorithms of numerical
algebra to the development of efficient procedures for the queueing systems
simulation, including the algorithms designed for implementation on parallel
and vector processors.

Since the closed form representation of the dynamics of more complicated
queueing systems including the $ G/G/m $ queue and queueing networks,
normally involves three operations, namely addition, maximization, and
minimization (see, e.g., \cite{5,6}), these systems cannot be described in
terms of max-algebra. In that case, however, appropriate algebraic
representations can be obtained using {\it minimax algebra\/} \cite{8,10}.

\bibliographystyle{utphys}

\bibliography{A_max-algebra_approach_to_modeling_and_simulation_of_tandem_queueing_systems}

\end{document}